\newcommand\Oh{{\mathcal O}}
\newcommand\sC{{\mathcal C}}
\newcommand\sT{{\mathcal T}}
\newcommand\sD{{\mathcal D}}
\newcommand\sE{{\mathcal E}}
\newcommand\sA{{\mathcal A}}
\newcommand\sX{{\mathcal X}}
\newcommand\sY{{\mathcal Y}}
\newcommand\sH{{\mathcal H}}
\newcommand\Lam{\Lambda}
\newcommand\al{\alpha}
\newcommand\s{\sigma}
\newcommand\Ga{\Gamma}
\newcommand\fie{\varphi}
\newcommand\Om{\Omega}
\DeclareMathOperator{\Hom}{Hom}
\DeclareMathOperator{\Gal}{Gal}
\DeclareMathOperator{\Def}{Def}
\newcommand{\CC}{\ensuremath{\mathbb{C}}}
\newcommand{\RR}{\ensuremath{\mathbb{R}}}
\newcommand{\ZZ}{\ensuremath{\mathbb{Z}}}
\newcommand{\QQ}{\ensuremath{\mathbb{Q}}}
\newcommand{\NN}{\ensuremath{\mathbb{N}}}
\newcommand{\hol}{\ensuremath{\mathcal{O}}}
\newcommand{\ra}{\ensuremath{\rightarrow}}
\newcommand{\frX}{\ensuremath{\mathfrak{X}}}
\def\eea{\end{eqnarray*}}
\def\bea{\begin{eqnarray*}}
\DeclareMathOperator{\Aut}{Aut}
\DeclareMathOperator{\End}{End}
\DeclareMathOperator{\Sing}{Sing}
\newcommand{\Proof}{{\it Proof. }}
\newcommand\dual{\mathrel{\raise3pt\hbox{$\underline{\mathrm{\thinspace d
\thinspace}}$}}}
\newcommand\qe{\ifhmode\unskip\nobreak\fi\quad $\Box$}       
\def\BOX{\hfill\lower.5\baselineskip\hbox{$\Box$}}
\newtheorem{theorem}{Theorem}
\newtheorem{theo}[theorem]{Theorem}
\newtheorem{remark}[theorem]{Remark}
\newenvironment{rem}{\begin{remark}\rm}{\end{remark}}
\newtheorem{prop}[theorem]{Proposition}
\newtheorem{lemma}[theorem]{Lemma}
\newtheorem{example}[theorem]{Example}
\newenvironment{ex}{\begin{example}\rm}{\end{example}}
\theoremstyle{definition}
\newtheorem{defin}[theorem]{Definition}
\newcommand{\torus}{\ensuremath{T}}
\DeclareMathOperator{\Irr}{Irr}
\DeclareMathOperator{\Bihol}{Bihol}
\DeclareMathOperator{\id}{id}
\def\tagform@#1{\maketag@@@{\ignorespaces#1\unskip\@@italiccorr}}
\newcolumntype{H}{@{}>{\lrbox0}l<{\endlrbox}} 
\begin{document}

\title[Rigid Group Actions on Complex Tori are Projective]{Rigid Group Actions on Complex Tori are Projective (after Ekedahl)}
\author{Fabrizio Catanese and  Andreas Demleitner}
\address {Lehrstuhl Mathematik VIII\\
Mathematisches Institut der Universit\"at Bayreuth, NW II\\
Universit\"atsstr. 30\\
D-95447 Bayreuth \\
Germany}
\email{Fabrizio.Catanese@uni-bayreuth.de \newline \hspace*{2.35cm} Andreas.Demleitner@uni-bayreuth.de}

\thanks{AMS Classification: 14K22, 16G99, 16K20, 20C05, 32G20, 32Q15.\\ 
The authors acknowledge support of the ERC 2013 Advanced Research Grant - 340258 - TADMICAMT}

\begin{abstract}
In this note, we give a detailed proof of a result due to Torsten Ekedahl,  
 describing complex tori admitting a rigid  group action and showing explicitly  their projectivity  and their structure
 in terms of CM-fields.
In the appendix, joint with Claudon, we show, using a method of Green-Voisin,  that all group actions on complex tori deform to projective ones.
\end{abstract}
\maketitle

\setcounter{section}{0}

\section{Introduction}

 The work of Kodaira \cite{kod}  \cite{kod2}  lead to the question whether any compact K\"ahler manifold   enjoys the property of admitting arbitrarily small deformations which are projective (Kodaira settled in  \cite{kod2}  the case of surfaces).
 
 Motivated by Kodaira's problem (see the final  section and the appendix) the first author  asked Torsten Ekedahl at an Oberwolfach conference
   around  1999    if there exists a rigid group action of a finite group $G \subset \Bihol(T)$ on a complex torus $T$ (see section  \ref{prelim} for  definitions regarding deformations of group actions) which is not projective. T. Ekedahl answered this question and sketched a strategy of proof for the statement that the rigidity of the action $(T,G)$ implies that $T$ is projective (i.e., 
 $T$ is an abelian variety).

  Later Claire Voisin  gave a counterexample to the general Kodaira problem showing in   \cite{Voisin} the existence of a rigid compact K\"ahler manifold which is not projective (and later in \cite{Voisin2}  she even gave counterexamples which are not bimeromorphic to a projective manifold).  Kodaira's property  still remains a very interesting  theme of research:  understanding  which compact K\"ahler manifolds or  K\"ahler spaces with klt singularities satisfy Kodaira's property
 (see   \cite{graf} for quite recent progress). 
  
  On the other hand Ekedahl's    approach allows a rather explicit description of rigid actions on complex tori in terms of orders in CM-fields, hence
  providing   explicitly given polarizations on them. Therefore  his result turned out  to be quite interesting and useful for other purposes (see \cite{Demleitner} for applications to the classification theory of quotient manifolds of complex tori), and for this reason we find it important to publish 
  here a complete proof.

\begin{theorem}[Ekedahl] \label{ekedahl}
	Let $(\torus,G)$ be a rigid group action of a finite group $G \subset \Bihol(T)$ on a complex torus $\torus$. Then $\torus$ (or, equivalently, $T/G$) is projective.    Moreover, if we write $T = V^{1,0} / \Lambda$, then $$  \Lambda \otimes_{\ZZ} \QQ = \oplus _j W_j^{n_j}, $$
	where $W_j$ is a Hodge structure on a CM field $F_j$ and where $\oplus_j F_j$ is a subalgebra of the centre of the group algebra $\QQ[G]$.
\end{theorem}

The contents  of the paper are as follows. 
	
	 In section \ref{prelim}, we briefly discuss deformations of group actions on complex manifolds. 

	 Then, in the subsequent section 
		\ref{Prel2},   we develop the tools used in  the proof of Theorem \ref{ekedahl}, mainly  based on Hodge theory and representation theory.
		
		The main ideas of the proof are the following:  if $\sA$ is a finite-dimensional semisimple $\QQ$-algebra, the rigidity of the action of $\sA$ (cf. Definition \ref{def-rigid}) on a rational Hodge structure $V$ of weight $1$ can be determined  by looking at the simple summands of  $\sA \otimes_{\QQ} \CC$	appearing in $V^{1,0}$, respectively in $V^{0,1}$. A second ingredient is that, for  $\sA = \QQ[G]$ with $G$ finite (and also in a more general situation), we show that rigidity is equivalent to  having a rigid action of the commutative subalgebra given by the centre $Z(\QQ[G])$.
		 
		 Then we apply 
		Proposition \ref{polarizable}, stating  that, if $\sA = Z(\QQ[G])$ is the centre of the group algebra  and the action of $\sA$ on $V$ is rigid,	then the  Hodge structure $V$ is polarizable.
		
		 Finally, in the appendix, we  show that every group action $(T,G)$ on a complex torus admits arbitrarily small deformations which are projective.

\section{Deformations of group actions} \label{prelim}

Let $X$ be a compact complex manifold. Let  $G \subset \Bihol(X)$ be a finite group, and denote by $\al \colon G \times X \to X$ the corresponding group action of $G$ on $X$.

\begin{defin}
1) A \textit{deformation $(p, \al')$} of the group action $\al$ of $G$ on $X$ consists  of a deformation 
 $p \colon (\frX ,\frX_0)  \to (B,t_0)$ of $X$ (i.e., $\frX_0 : = p^{-1} (t_0)$ and $X \cong \frX_0$)  given together with  
 $\al' \colon G \times \frX \to \frX$, a holomorphic group action commuting with $p$ (here we let $G$ act trivially on the base), such that the action on $\frX_0 \cong  X$ induces  the initially given action $\al$.

2) A deformation $(p, \al')$ is said to be  {\em trivial} if  its germ is isomorphic to the trivial deformation
$ X \times B \ra B$, endowed with the action $\al \times \id_B$.

3) The action $\al$ is said to be {\em rigid} if every deformation of $\al$ is trivial.
\end{defin} 

Kuranishi theory leads to an easy characterization  of rigidity  of an  action $\al$ of a group $G$ on $X$,
see \cite[p. 23]{Catanese-CIME}, \cite[Ch. 4]{Catanese-Guide}, \cite{Binru}. 

Denote by $p \colon \frX \to \Def(X)$ the Kuranishi family of $X$; then  this characterization is related to the  question: which condition  on $t \in \Def(X)$ guarantees that $G$ is a subgroup of $\Aut(\frX_t)$? It turns out (cf. \cite[p. 23]{Catanese-CIME}) that $G \subset \Bihol(\frX_t)$ if and only if $g_* t = t$ for any $g \in G$, so that $t \in \Def(X) \cap H^1(X, \theta_X)^G$. 

We then have (see proposition 4.5 of \cite{Catanese-Guide}):

\begin{prop}
Set	$\Def(X)^G := \Def(X) \cap H^1(X, \theta_X)^G$. The group action $\al$ of $G$ on $X$ is  rigid if 
and only if $\Def(X)^G = 0$ (as a set). A fortiori the action is rigid if $H^1(X, \theta_X)^G = 0$  (in this latter case we say that the action is infinitesimally rigid).
\end{prop}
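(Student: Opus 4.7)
The plan is to exploit the versality of the Kuranishi family $p \colon \frX \to \Def(X)$ in order to reduce deformations of $(X, \al)$ to classifying maps into $\Def(X)^G$. Given a deformation $(p', \al')$ of $\al$ over $(B, t_0)$, forgetting the action gives a deformation of $X$ alone, which by versality of Kuranishi is induced, on a neighbourhood of $t_0$, by a holomorphic map $\fie \colon (B, t_0) \to (\Def(X), 0)$. The $G$-action on the total space $\frX'$ commuting with $p'$ restricts fibrewise to a biholomorphism of $\frX'_b \iso \frX_{\fie(b)}$ that specializes to $\al$ on the central fiber. By the characterization recalled immediately before the statement, this forces $g_* \fie(b) = \fie(b)$ for every $g \in G$ and every $b$, so $\fie$ factors through $\Def(X) \cap H^1(X, \theta_X)^G = \Def(X)^G$.

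For the converse direction I would invoke the equivariant version of Kuranishi's theorem that underlies the claims in \cite{Catanese-CIME}, \cite{Catanese-Guide}, \cite{Binru}: over the germ $\Def(X)^G$ the restricted Kuranishi family carries a canonical holomorphic $G$-action extending $\al$ on the central fiber. Consequently any holomorphic map $(B, t_0) \to (\Def(X)^G, 0)$ yields, by pullback, a genuine $G$-equivariant deformation of $(X, \al)$. Combining the two directions shows that a deformation of $\al$ is trivial if and only if the classifying map $\fie$ is the constant map at $0$; hence $\al$ is rigid exactly when $\Def(X)^G$ reduces, as a set, to the single point $\{0\}$, since only then is every classifying map forced to be constant.

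The \emph{a fortiori} statement follows at once: $\Def(X)$ is constructed as the germ at $0$ of the zero locus of Kuranishi's obstruction map $H^1(X, \theta_X) \to H^2(X, \theta_X)$, and this map is $G$-equivariant with respect to the natural action on cohomology induced by $\al$, whence $\Def(X)^G \subseteq H^1(X, \theta_X)^G$. Therefore vanishing of the right-hand side forces vanishing of the left-hand side, and the action is rigid.

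The delicate step, as I see it, is not the forward direction (classification of the underlying deformation of $X$ by a map into $\Def(X)$ is a routine application of Kuranishi's theorem), but rather verifying that the fibrewise $G$-action along $\Def(X)^G$ genuinely assembles into a \emph{holomorphic} action of $G$ on the total space of the restricted Kuranishi family, rather than merely a pointwise family of biholomorphisms. This is precisely the content of the equivariant version of Kuranishi's theorem quoted above, and it is where the cited references do the substantive work.
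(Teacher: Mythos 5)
Your outline is essentially the argument the paper is relying on: the paper itself gives no proof of this proposition, quoting it from \cite[Prop. 4.5]{Catanese-Guide} with only the preceding characterization ($G \subset \Bihol(\frX_t)$ iff $g_* t = t$) as justification, and your reduction via versality to a classifying map $\fie \colon (B,t_0) \to (\Def(X),0)$ landing in $\Def(X)^G$, together with the equivariant Kuranishi family for the converse, is exactly the standard argument behind that citation. The only step you (like the paper) leave implicit is that constancy of $\fie$ trivializes only the underlying family $\frX' \cong X \times B$, and to conclude that the action is isomorphic to $\al \times \id_B$ one still needs that a holomorphic family of embeddings of the finite group $G$ into $\Bihol(X)$ is locally conjugate to a constant one --- this is part of the substantive work done in the cited references, as you correctly anticipate at the end.
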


In the upcoming chapter we shall consider the case where  $X = T$ is a complex torus: the rigidity of $(T,G)$, 
amounting to the fact that the representation of $G$ on $H^1(X, \Theta_X)$ contains no trivial summand,
can then be read off explicitly from the action of $G$ on the tangent bundle.

\section{ Rigid actions on rational Hodge structures}\label{Prel2}

 Denote by  $\sH^1$ the category of rational Hodge structures of type $((1,0),(0,1))$. An object of $\sH^1$
  is a finite-dimensional $\QQ$-vector space $V$  endowed with a decomposition
\[
V \otimes_{\QQ} \CC  = U \oplus \overline{U}  = : V^{1,0} \oplus V^{0,1}.
\]

The elements of $\sH^1$ can be viewed as isogeny classes of complex tori $$T: = (\Lam \otimes_{\ZZ} \CC )/  (\Lam \oplus V^ {0,1})  ,$$
where $\Lam \subset V$ is an {\em order}, i.e. a  free subgroup of maximal rank (by abuse of notation we shall also say that $\Lam$ is a lattice in $V$,
observe that $V = \Lam \otimes_{\ZZ} \QQ$).

We have isogeny classes of Abelian varieties when a rational Hodge structure is polarizable, according to the following

\begin{defin}
	Let $V \in \sH^1$ and write for short $V_\CC := V \otimes_{\QQ} \CC$. 
	
	A \textit{polarization on $V$} is an alternating form  $E \colon V \times V \to \QQ$ satisfying the two Hodge-Riemann Bilinear Relations: 
	
	i)  The complexification $E_\CC \colon V_\CC \times V_\CC \to \CC$ satisfies $E_\CC(V^{1,0},V^{1,0}) = 0$ (hence also $E_\CC(V^{0,1},V^{0,1}) = 0$)
	
	 ii) For any non-zero vector $v \in V^{1,0}$, we have $  -  \ i \cdot E_\CC(v,\overline{v}) > 0$ 
	 	 
	 Equivalently, setting  $E_\RR \colon V_\RR \times V_\RR \to \RR$, we have:
	 
	 I) $E_\RR(J  x, J y) = E_\RR(  x,  y) $
	 
	 II) the symmetric bilinear form $E_\RR  (x, J  y)$ is positive definite.
	 
	 Here, if $ x = u + \bar{u}$, $Jx : = iu - i \bar{u}$ ($J^2 = - Id$).
\end{defin}

Let $\sA$ be a semisimple and finite-dimensional $\QQ$-algebra (for example the group algebra $\sA = \QQ[G]$ for a finite group $G$). We denote an action $r \colon \sA \to \End_{\sH^1}(V)$ for $V \in \sH^1$ by a triple $(V,\sA,r)$.

If  $\Lam \subset V$ is a lattice and $\torus = (V \otimes_\QQ \CC)/  (\Lam \oplus V^ {0,1}) $ is the corresponding complex torus then  $\sA$ maps  to $\End (T)\otimes _{\ZZ}\QQ $.

\begin{defin} \label{def-rigid}
An action $(V,\sA,r)$ is called \textit{rigid}, if 
\begin{align} \label{rigidity}
 \Hom_{\sA} (V^{0,1}, V^{1,0}) = 0.
\end{align}
\end{defin}

Rigidity  \ref{rigidity} means, in view of what we saw in the previous section, and  in view of 
\[
H^1(\Theta_{\torus}) =  H^1(\Oh_{\torus}) \otimes_\CC H^0(\Om^1_{\torus})^\vee =  \overline{U}^\vee \otimes_\CC U  =  \Hom_{\sA} (V^{0,1}, V^{1,0}),
\]
that there are no deformations of $\torus$ preserving the $\sA$-action.\\

We consider now some examples of the above notion.

\begin{ex}\label{imaginary} 
Let $ \sA$ be a totally imaginary number field $F$. This means that  $[F : \QQ] = 2k$ and $F$ possesses $2k$ different embeddings 
$\s_j : F \ra \CC$, none of which  is  \textit{real} (this means: $\s_j (F) \subset \RR$). 

Hence each $\s_j$ is different from the complex conjugate, $\sigma_j \neq  \overline{\sigma_j}$,
and if we set $V : = F$, with the obvious action of $F$, all the Hodge structures on $V$ are rigid
and correspond to the finite set of partitions of the set $\sE$ of embeddings of $F$  into
two  conjugate sets $\{ \s_1, \dots, \s_k\} $ and  $\{ \overline{\s_1}, \dots, \overline{\s_k} \} $.

Since  the $F$-module $F \otimes_{\QQ} \CC$ is the direct sum 
$$ F \otimes_{\QQ} \CC = \oplus_{\s_j \in \sE} \  \CC_{\s_j} ,$$
where  $\CC_{\s_j} $ is the vector space $\CC$ with left action of $F$ given by:
$$ x \cdot z : = \s_j(x) \cdot z, \ \forall x \in F , z \in \CC, $$
 and choosing  such a  partition amounts to choosing  $ V^{1,0} : = \oplus_{j=1, \dots k} \CC_{\s_j} $.
\end{ex}

A particular case is given by the class of CM-fields.

\begin{ex}\label{CM} 

Recall  that a \textit{CM-field} is a totally imaginary quadratic extension $F$ of a totally real number field $K$. 

Equivalently, (cf. \cite[Proposition 5.11]{Shimura}) $F$ is a CM-field  if it carries a non-trivial involution $\rho$ such that $\sigma \circ \rho = \overline{\sigma}$ for all embeddings $\sigma \colon F \hookrightarrow \CC$ .
In particular $F$ is totally imaginary.

In this case any Hodge structure on $V : = F$ is polarizable.

			 Let indeed $\sigma_1,...,\sigma_k \colon F \hookrightarrow \CC$ be the embeddings of $F$ occurring in $V^{1,0}$. Following  \cite[p. 128]{Shimura}  choose $\zeta \in F$ satisfying the following conditions:
			\begin{itemize}
				\item[a)] $\zeta$ is imaginary, i.e., $\rho(\zeta) = - \zeta$,
				\item[b)] $\sigma_j(\zeta)$ is imaginary with  positive imaginary part for each  $j = 1,..., k$.  
			\end{itemize}
			A polarization on $V$ of $F$ is then given,  if we set $x_j := \sigma_j(x),  y_j := \sigma_j(y)$, by
			the skew symmetric form  (we set here $\s_{k+j} : = \overline{ \s_j}$)
			\begin{align*}
			E(x,y) := tr_{F/\QQ}(\zeta x \rho (y)) = 
			 \sum_{j=1}^{2k}  \sigma_j(\zeta)x_j \overline{y_j}  = \sum_{j=1}^k \sigma_j(\zeta)(x_j \overline{y_j} - \overline{x_j} y_j). 
			\end{align*}
			In fact,   the first Riemann bilinear relation amounts to $E(J x, J y) = E (x,y)$, which is clearly satisfied, 
			since $(J x)_j = i x_j ,$ for $ j = 1, \dots, k$, and the  real part of the associated  Hermitian form 
			is the  symmetric  form 
			 $$E(x, J   y) = \sum_{j=1}^k  (-i) \sigma_j(\zeta) ( x_j \overline{y_j} + \overline{x_j} y_j),$$ 
			which is positive definite since 
			$$E(x, J  x) = \sum_{j=1}^k  2 \  Im (\sigma_j(\zeta))   |x_j|^2 > 0 $$
			 for $x \neq 0$. \\

\end{ex}

 \bigskip

Let us  now proceed towards the proof of the main theorem.

An important  step towards the main  Theorem is that in the case where 
\begin{align}\sA = \QQ[G] \end{align}
rigidity can be reduced to rigidity of the action restricted to 
the centre of the group algebra.

\begin{prop} \label{reduce_to_comm}
	Let $\sA = \QQ[G]$ be the group algebra of a finite group $G$ over the rationals. 
	
	Then the  triple $(V,\sA,r)$ is rigid if and only if $(V, Z(\sA),r')$ is rigid, where $Z(\sA)$ is the centre of $\sA$ and $r'$ is the restriction of $r$ to $Z(\sA)$. 
\end{prop}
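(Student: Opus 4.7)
The plan is to compare the two $\Hom$-spaces directly, after passing to the Wedderburn decomposition of $\CC[G]$. The easy implication is clear: since $Z(\sA)\subset\sA$, every $\sA$-linear map is a fortiori $Z(\sA)$-linear, so
\[
\Hom_{\sA}(V^{0,1},V^{1,0})\ \subset\ \Hom_{Z(\sA)}(V^{0,1},V^{1,0}),
\]
whence rigidity of $(V,Z(\sA),r')$ forces rigidity of $(V,\sA,r)$. Only the converse requires work.

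For the nontrivial implication I would complexify. Since $V^{1,0}$ and $V^{0,1}$ are complex vector spaces on which $\sA$ acts $\CC$-linearly, the $\Hom$-spaces over $\sA$ and over $\sA\otimes_{\QQ}\CC=\CC[G]$ coincide, and likewise for the centre. Artin--Wedderburn produces
\[
\CC[G]\ \cong\ \prod_{i\in\Irr(G)}\Mat(n_i,\CC),\qquad Z(\CC[G])\ \cong\ \prod_{i\in\Irr(G)}\CC,
\]
with central idempotents $e_i$, and isotypical decompositions
\[
V^{1,0}\ \cong\ \bigoplus_i W_i^{\oplus b_i},\qquad V^{0,1}\ \cong\ \bigoplus_i W_i^{\oplus a_i},
\]
where $W_i$ is the irreducible complex $G$-representation indexed by $i$.

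The crucial observation is that the isotypical summands are exactly the images of the central idempotents, so the decomposition is already cut out by $Z(\CC[G])$, but the centre imposes no finer structure. Consequently any $Z(\CC[G])$-linear map $V^{0,1}\to V^{1,0}$ must respect this splitting, and on the $i$-th block is an arbitrary $\CC$-linear map $W_i^{\oplus a_i}\to W_i^{\oplus b_i}$. On the other hand, Schur's lemma yields
\[
\Hom_{\CC[G]}(V^{0,1},V^{1,0})\ \cong\ \bigoplus_i \Mat(b_i\times a_i,\CC).
\]
Both $\Hom$-spaces therefore vanish under exactly the same condition: for every $i\in\Irr(G)$, $a_i=0$ or $b_i=0$. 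The two notions of rigidity thus coincide. The only mildly delicate point is the justification of the passage to complex coefficients and the identification of the isotypical decomposition with the spectral decomposition under the $e_i$; once this is granted, the rest is bookkeeping with Schur and Wedderburn. A noteworthy feature of the argument is that it never uses the extra structure $\sA=\QQ[G]$ beyond semisimplicity together with the fact that its simple factors over $\CC$ are full matrix algebras with one-dimensional centres, which explains the parenthetical remark in the statement that a more general version holds.
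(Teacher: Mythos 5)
Your proof is correct and follows essentially the same route as the paper: complexify, decompose via the central idempotents $e_\chi$ of $Z(\CC[G])$, observe that a $Z(\CC[G])$-linear map must preserve the isotypic decomposition (which is the paper's computation $\psi(v)=\psi(e_\chi v)=e_\chi\psi(v)$), and compare with Schur's lemma on the $\CC[G]$-side. Your closing remark that only semisimplicity is really used matches the paper's subsequent generalization to arbitrary finite-dimensional semisimple $\QQ$-algebras.
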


\begin{proof}

For each field $K$, $\QQ \subset K \subset \CC$, 
$\sA \otimes_\QQ K = K[G] $ has as  centre $Z_{K} : = Z (K[G])$,  the  vector space 
with basis $ v_{\sC}$, indexed by the conjugacy classes $\sC$ of $G$, and where
$$ v_{\sC} : = \sum_{g \in \sC} g.$$

For $K = \CC$, another more useful basis is  indexed by the irreducible complex representations $W_{\chi}$ of $G$,
and their characters $\chi$ (these form an orthonormal basis for the space of class functions,
i.e. the space $Z_{\CC}$ if we identify the element $g$ to its characteristic function).

	For each irreducible $\chi$, the element
	\[
	e_\chi := \frac{\chi(1)}{|G|} \sum_{g \in G} \chi(g^{-1}) \cdot g \in \CC[G]
	\]
	is an  idempotent in $Z(\CC[G])$. Indeed, we even have that
	\[
	Z(\CC[G]) = \bigoplus_\chi \CC \cdot e_\chi,
	\]
	and the  idempotents $e_\chi$  satisfy the orthogonality relations $e_{\chi'} \cdot e_{\chi} = 0$ for $\chi \neq \chi'$.
	
	This leads directly to the decomposition 
	\[
    \sA \otimes_\QQ \CC =  \CC[G] = \bigoplus_{\chi \in Irr} A_\chi, \; \; A_\chi : =    e_\chi   \CC[G]  \cong \End(W_\chi),
	\]
	where $\chi$ runs over all irreducible characters of $G$, and to the semisimplicity of the group algebra. 
Notice that $e_\chi$ acts as the identity  on $W_\chi$, and as $0$ on $W_{\chi'}$ for $\chi' \neq \chi$. 

	In fact, we shall prove the stronger statement that for any two finitely generated $\CC[G]$-modules $M$ and $N$ (note that $\sA \otimes_\QQ \CC = \CC[G]$)
	\[
	\Hom_{\CC[G]}(M,N) = 0 \iff \Hom_{Z(\CC[G])}(M,N) = 0.
	\]
The right  hand side $\Hom_{Z(\sA \otimes_\QQ \CC)} (M, N) $   clearly contains the left hand side.

By  semisimplicity, 
  each  representation $M$ splits as a direct sum of irreducible representations,
$$  M =  \sum_{\chi \in Irr}  M_{\chi }, M_{\chi } = W_{\chi } \otimes_{\CC[G]} (\CC^r),$$
where $ \CC^r$ is a trivial representation of $G$.

By bilinearity  we may assume that $M = W_\chi$ and $N = W_{\chi'}$ are simple modules associated to irreducible characters $\chi, \chi'$ of $G$.

		Then, by the Lemma of Schur, the left hand side
		 $\Hom_{\sA \otimes_\QQ \CC} (M, N)$  is $=0$ for $\chi' \neq \chi$,
and  isomorphic to $\CC$  for $\chi'  =  \chi$.
 
For the right hand side, it suffices  to prove that $\Hom_{Z(\sA \otimes_\QQ \CC)} (M, N) = 0$
for  $\chi' \neq \chi$, when  $M = W_{\chi }, N = W_{\chi' }$.

However, $e_\chi$ acts as the identity on $M$ and as zero on $N$,
hence $\psi \in \Hom_{Z(\sA \otimes_\QQ \CC)} (M, N) $
implies
$$ \psi (v) =   \psi  ( e_{\chi} v) =  e_{\chi} ( \psi (  v))  = 0,$$
as we wanted to show.

		This shows the statement.

\end{proof}

We have more generally:

\begin{prop} Let $\sA$ be a semisimple $\QQ$-algebra of finite dimension, and let 
$(V, \sA,  r)$ be an action on a rational Hodge structure $V$, Then $r$ is   rigid   if and only if $(V, Z(\sA),   r')$ is  rigid;
here $Z(\sA)$ is the centre of $\sA$ and $r'$ is the restriction of $r$.
\end{prop}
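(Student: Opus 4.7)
The plan is to mimic the proof of Proposition \ref{reduce_to_comm} almost verbatim, replacing the representation-theoretic input with the structure theory of semisimple $\QQ$-algebras. The only place where the group-theoretic hypothesis was used in that proposition was to produce the central idempotents $e_\chi$ and to invoke Schur's lemma together with semisimplicity; both of these ingredients are available for an arbitrary finite-dimensional semisimple $\QQ$-algebra $\sA$ after extending scalars.

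First I would reduce everything to the complex algebra $\sA_\CC := \sA \otimes_\QQ \CC$. Since the Hodge summands $V^{1,0}, V^{0,1}$ are naturally $\sA_\CC$-modules, the rigidity condition $\Hom_\sA(V^{0,1}, V^{1,0}) = 0$ is equivalent to $\Hom_{\sA_\CC}(V^{0,1}, V^{1,0}) = 0$, and similarly for the restriction to $Z(\sA)$ (noting $Z(\sA) \otimes_\QQ \CC = Z(\sA_\CC)$). Artin--Wedderburn gives a decomposition
\[
\sA_\CC = \bigoplus_{i \in I} A_i, \qquad A_i \cong \End_\CC(W_i),
\]
where the $W_i$ are the (finitely many) simple $\sA_\CC$-modules. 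The centre $Z(\sA_\CC) = \bigoplus_i \CC \cdot e_i$ is spanned by the central idempotents $e_i \in A_i$, which act as the identity on $W_i$ and as zero on $W_j$ for $j \neq i$.

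By semisimplicity, any finitely generated $\sA_\CC$-module $M$ decomposes as $M = \bigoplus_i W_i \otimes_\CC M_i$, with the multiplicity spaces $M_i$ viewed as trivial $\sA_\CC$-modules. By bilinearity of $\Hom$ it suffices to test the equivalence
\[
\Hom_{\sA_\CC}(M,N) = 0 \iff \Hom_{Z(\sA_\CC)}(M,N) = 0
\]
on pairs of simples $M = W_i$, $N = W_j$. For $i = j$, Schur's lemma gives $\Hom_{\sA_\CC}(W_i, W_i) \cong \CC \neq 0$, and the same nonzero maps are $Z(\sA_\CC)$-linear. For $i \neq j$, Schur gives $\Hom_{\sA_\CC}(W_i, W_j) = 0$, and on the other side any $\psi \in \Hom_{Z(\sA_\CC)}(W_i, W_j)$ must satisfy $\psi(v) = \psi(e_i v) = e_i \psi(v) = 0$, since $e_i$ acts as $1$ on $W_i$ and as $0$ on $W_j$. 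Applying this with $M = V^{0,1}$ and $N = V^{1,0}$ yields the equivalence of rigidity for $(V,\sA,r)$ and for $(V, Z(\sA), r')$.

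The main (and only) conceptual point to be careful about is the formulation: the centre $Z(\sA)$ is itself a product of fields (the centres of the simple factors of $\sA$ over $\QQ$), so rigidity of the restricted action is genuinely a condition on the decomposition of $V^{1,0}$ and $V^{0,1}$ along the idempotents of $Z(\sA)$. I expect no real obstacle, since the Schur/idempotent argument of Proposition \ref{reduce_to_comm} was already written in a form that depends only on semisimplicity and on the availability of the central idempotents $e_i$; passing from $\QQ[G]$ to a general semisimple $\sA$ simply replaces the indexing by irreducible characters with the indexing by simple factors of $\sA_\CC$.
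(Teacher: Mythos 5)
Your proof is correct and follows essentially the same route as the paper: both reduce, by bilinearity and semisimplicity, to comparing $\Hom_{\sA\otimes\CC}$ and $\Hom_{Z(\sA\otimes\CC)}$ on simple modules and then apply Schur's lemma together with the central idempotents of the Wedderburn decomposition. The only cosmetic difference is that the paper dispatches the case of two simples over the same simple factor by citing Jacobson's lemma that all irreducible modules over a simple Artinian ring are isomorphic, whereas you make this explicit via the identification $A_i\cong\End_\CC(W_i)$ over $\CC$.
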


\Proof
More generally, if $M, N$ are $\sA \otimes \CC$-modules, then we claim that
$$  \Hom_{\sA \otimes \CC} (M, N) = 0 \Leftrightarrow \Hom_{Z(\sA \otimes \CC)} (M, N) = 0.$$

By bilinearity of both sides, and by semisimplicity (each module splits as a direct sum of irreducibles) we can assume that $M,N$ are simple
modules and that $\sA$ is a simple algebra.

By Schur's Lemma the left hand side is non zero exactly when  $M$ and $N$ are   isomorphic. The left hand side is contained in  the right hand side, so it suffices to show that the right hand side
is nonzero exactly when  $M$ and $N$ are   isomorphic. But (\cite{jacobson2}, Lemma 1,  page 205) any two irreducible modules over a simple Artininian ring are isomorphic.

\qed

 \begin{rem} \label{idempotents}
		We have $\CC[G] = \bigoplus_\chi \CC[G] \cdot e_\chi$. 
		
		 Working instead over a  field $K  $ of characteristic $0$, an algebraic extension of $\QQ$ (so $ \QQ \subset K \subset \CC$), the decomposition of $K[G]$ into simple summands is (see \cite{Yamada}, Proposition 1.1)
		 again provided by central idempotents in $K[G]$,
		\[
		K[G] = \bigoplus_{[\chi]} K[G]e_{K}(\chi), \; \; \;  e_{K}(\chi) := \sum_{\chi^\sigma \in [\chi]} e_{\chi^\sigma},
		\]
		where the first sum runs over the set of $\Ga$-orbits $[\chi]$  in the set   all irreducible characters $\chi$ of $G$; here  $\Ga$ is 
		 the Galois group $\Gal(K(\chi)/K)$ of the field  extension $K(\chi)$ of $K$,   generated by 
		 the values of all the characters $\chi$, i.e., by  $\{\chi(g) \, | \, g \in G, \chi \in \Irr (G) \}$.
		 
		 And the centre of $K[G]$ is a direct sum of fields
		 $$Z(K[G]) =   \bigoplus_{[\chi]} F_{[\chi]},$$
		  where the field $F_{[\chi]}$ is the centre  (for the last isomorphism, see \cite{Yamada}, Proposition 1.4)  $$F_{[\chi]}: = Z(K[G]) e_{K}(\chi) \cong K(\{\chi(g)| g \in G \}) $$
		  of the algebra $K[G]e_{K}(\chi)$, and enjoys the property that  $F_{[\chi]} \otimes_K \CC =  \bigoplus _{\chi \in [\chi]} \CC e_{\chi^\sigma}$.

	\end{rem}
	
	The next lemma explains the relation occurring between finite groups and CM-fields.
	
	\begin{lemma}\label{cm}
	The centre of the group algebra $Z(\QQ[G])$ splits as a direct sum of number fields, $Z(\QQ[G]) = F_1 \oplus ... \oplus F_l$
	which are either  totally real,  or  CM-fields.
	\end{lemma}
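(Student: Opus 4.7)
The plan is to use the decomposition of $Z(\QQ[G])$ given in Remark \ref{idempotents} (with $K = \QQ$), which reads
\[
Z(\QQ[G]) = \bigoplus_{[\chi]} F_{[\chi]}, \qquad F_{[\chi]} \cong \QQ(\chi(g)\mid g \in G),
\]
the sum being over $\Gal(\QQ(\chi)/\QQ)$-orbits of irreducible complex characters of $G$. Thus it suffices to show that each character field $F_{[\chi]}$ is either totally real or a CM-field.

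First I would recall that for any $g \in G$ and any complex representation $\rho$ with character $\chi$, the value $\chi(g)$ is a sum of eigenvalues of $\rho(g)$, which are roots of unity of order dividing $n := |G|$ (or the exponent). Hence $F_{[\chi]} \subset \QQ(\zeta_n)$, so $F_{[\chi]}$ is an abelian number field, and in particular it is Galois over $\QQ$. The key feature of Galois extensions $F/\QQ$ sitting inside a fixed embedding $F \subset \CC$ is that they are either totally real or totally imaginary: complex conjugation defines an element $c$ of $\Gal(F/\QQ)$, and an embedding $\sigma_0 \circ \tau$ has real image if and only if $c = \id$ in $\Gal(F/\QQ)$, independently of $\tau$.

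Next I would observe that $\overline{\chi(g)} = \chi(g^{-1})$, because the eigenvalues of $\rho(g)$ lie on the unit circle and $\rho(g^{-1}) = \rho(g)^{-1}$. Consequently complex conjugation sends the generating set $\{\chi(g)\mid g \in G\}$ of $F_{[\chi]}$ into itself, so it restricts to an automorphism $c$ of $F_{[\chi]}$. Two cases now appear:

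\emph{Case 1:} $c = \id_{F_{[\chi]}}$. Then $F_{[\chi]} \subset \RR$, and being Galois over $\QQ$ it is totally real.

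\emph{Case 2:} $c$ is a non-trivial involution of $F_{[\chi]}$. Then by the dichotomy above, $F_{[\chi]}$ is totally imaginary. Let $F_{[\chi]}^{+}$ denote the fixed field of $c$; it is a subfield of the abelian extension $\QQ(\zeta_n)$, hence Galois over $\QQ$, and it lies in $\RR$, so it is totally real. Since $c$ has order $2$, $[F_{[\chi]} : F_{[\chi]}^{+}] = 2$, and $F_{[\chi]}$ is the totally imaginary quadratic extension of the totally real field $F_{[\chi]}^+$, i.e.\ a CM-field.

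No serious obstacle arises; the only point requiring care is the passage from ``$c$ non-trivial'' to ``$F_{[\chi]}$ totally imaginary'', which is ensured precisely because $F_{[\chi]}$ is Galois (indeed abelian) over $\QQ$. Combining the two cases yields the desired decomposition of $Z(\QQ[G])$ into totally real and CM-number-field summands.
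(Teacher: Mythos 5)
Your proof is correct and takes essentially the same route as the paper: both identify each summand as a subfield of the cyclotomic field $\QQ(\zeta_{|G|})$ (the paper via $Z(\QQ[G])\subset Z(\QQ(\zeta_m)[G])\cong\QQ(\zeta_m)^d$ and the Galois correspondence $F_j=\QQ(\zeta_m)^H$, you via the character-field description of Remark \ref{idempotents}) and then apply the standard dichotomy that an abelian number field is totally real or CM according to whether complex conjugation acts trivially. Your version merely spells out in more detail the step the paper leaves implicit, namely that a non-trivial restriction of complex conjugation exhibits $F_{[\chi]}$ as a totally imaginary quadratic extension of its totally real fixed field.
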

	\Proof
	Write  $m := |G|$,  let $\zeta_m$ be  a primitive $m$-th root of unity and let $d $ be the number of conjugacy classes in $G$,
	which equals the number of irreducible representations of $G$. Then 
			\[
			F_j \subset Z(\QQ[G]) \subset Z(\QQ(\zeta_m)[G]) \cong_{\QQ-\text{alg.}} \QQ(\zeta_m)^d,
			\]
			where we used in the last isomorphism that every complex representation of $G$ is defined over $\QQ(\zeta_m)$. Hence $F_j$ embeds into the cyclotomic field $\QQ(\zeta_m)$. The extension $\QQ(\zeta_m)/\QQ$ is Galois with group $\Gal(\QQ(\zeta_m)/\QQ) \cong (\ZZ/m\ZZ)^*$ (the isomorphism maps $\fie_a \in \Gal(\QQ(\zeta_m)/\QQ)$, 
			such that $\fie_a(\zeta_m) = \zeta_m^a$,  to $a \in (\ZZ/m\ZZ)^*$), so by the Main Theorem of Galois Theory, there is a subgroup $H$ of $\Gal(\QQ(\zeta_m)/\QQ)$, such that $F_j \cong \QQ(\zeta_m)^H$ (the subfield of $\QQ(\zeta_m)$ fixed by the action of $H$). If $-1 \in H$ (which corresponds to $\fie_{-1}$, the complex conjugation), the field $F_j$ is totally real, otherwise $F_j$ is a CM-field.\par\hfill \par
	
	\qed
	
\section{Proof of Theorem \ref{ekedahl}}

Fix  now  an action $(V,\sA,r)$ and assume that
\begin{align} \label{assump}
\sA \text{ is commutative.}
\end{align}

Since $\sA$ is commutative, $\sA$ is a direct sum of number fields, 
	\[
	\sA = F_1 \oplus ... \oplus F_l.
	\]

Assume that we have a homomorphism of algebras $\s : \sA \ra \CC$. For each idempotent $e$ of $\sA$,  $\s (e)$ is an idempotent of $\CC$, hence 
$\s (e) = 1$ or $\s (e) = 0$.  In $\sA$, the identity element $1$  is a sum of idempotents 
$$ 1 = 1_{F_1} + \dots +  1_{F_l},$$
and if $\s \neq 0$, then $\s (1) = 1$. This implies that for such a homomorphism $\s$ there is exactly one $j \in \{1, \dots, l\}$, such that $\s ( 1_{F_j}) = 1$,
and, for $i\neq j$, we have $\s ( 1_{F_i}) = 0$.

 Let then $\sC = \{\sigma_1, ..., \sigma_k \}$ be the set  of all the distinct $\QQ$-algebra homomorphisms $\sA \to \CC$: then these 
 homomorphisms $\s_j : \sA \to \CC$  are obtained  as the composition of one of the projections $\sA \to F_h$ with an embedding $F_h \hookrightarrow \CC$\label{page} 
(hence  $k = \sum_h [F_h:\QQ] = \dim_\QQ \sA$).

Define now (as in Example \ref{imaginary})  the $\sA$-module $\CC_{\sigma_j}$ as the vector space $\CC$
endowed with the action of $\sA$ such that $$x \cdot z := \sigma_j(x) \cdot z.$$ 

Hence we have a splitting of $\sA$-modules 

$$\sA \otimes_\QQ \CC = \bigoplus_{j=1}^l (F_j \otimes_\QQ \CC) = \bigoplus_{j=1}^k \CC_{\sigma_j}.$$

We now show that 
we have a splitting in the category of rational Hodge structures
 $$V  = V_1 \oplus ... \oplus V_l,$$
where $V_i$ is an $F_i$-module, and an $\sA$-module via the surjection $\sA \to F_i$.

We simply 
 define $V_j : =  1_{F_j} \cdot V$. We have a splitting of modules 
$$V  = V_1 \oplus ... \oplus V_l,$$
since for $i\neq j$,  $1_{F_i} 1_{F_j}=0$, and $$ v = 1 \cdot v = (1_{F_1} + \dots + 1_{F_l } ) v = : v_1 + \dots + v_l. $$

It is a splitting in the category of rational Hodge structures because
each element of $\sA$ preserves the Hodge decomposition, hence $V_j $ is a sub-Hodge structure of $V$.

Therefore the action $r$ is a direct sum of actions

$$r_j : F_j \ra \End_{\sH^1} (V_j)$$

Each $r_j$  induces, by tensor product,  a homomorphism of rings 
\begin{align*}
F_j  \otimes_\QQ \CC  \to \End(V_j \otimes_\QQ \CC) = \End(V_j^{1,0} \oplus V_j^{0,1}),
	\end{align*}
and a splitting of $\sA$-modules
	\begin{align*}
	 V \otimes \CC =   V^{1,0} \oplus V^{0,1}   =   \bigoplus_{j=1}^k (V^{1,0}_{\sigma_j} \oplus V^{0,1}_{\sigma_j})
	\end{align*} 
	 where $V_{\sigma_j}$ is the character subspace on which $\sA$ acts via $x \cdot v := \sigma_j(x) \cdot v.$
This holds for the following reason: each $V_j$ is  an $F_j$ module; and since $ F_j$ is a number field, 
then $F_j = \QQ[x]/ P(x)$, where $P$ is irreducible, and $r_j(x) $ is an endomorphism $a_j$ of $V_j$ with minimal polynomial $P$ (a polynomial with distinct roots).
In particular, $a_j$ is diagonalizable over  $V_j \otimes_\QQ \CC$,
and each diagonal entry yields some embedding $\s_h$ of $F_j$ into $\CC$.

\begin{rem}
The rigidity of $(V, \sA, r)$ is equivalent to the fact  that for each $\sigma_j \in \sC$ either  $V^{1,0}_{\sigma_j}$ or $V^{0,1}_{\sigma_j}$
is zero,
  in particular, since  $\overline{V^{1,0}_{\sigma_j}} = V^{0,1}_{\overline{\sigma_j}}$,
  no real $\sigma_j$ appears either  in $V^{1,0}$ or in $V^{0,1}$.

\end{rem}

Following a terminology similar to   the one introduced in \cite{topmethods}, we define the notion of Hodge-type.

\begin{defin}
Define the \textit{Hodge-type} of an   action of $\sA$ by the function 
$\tau_V : \sC  \ra \NN$,   such that
 $$ \tau_V (\s) : = dim_{\CC} \ V^{1,0}_{\sigma}.$$

  Hodge symmetry translates into 
 $$ (HS) \ \tau_V(\s) +  \tau_V(\bar{\s}) = \dim_{\CC} V_{\sigma},$$ 
which  implies in particular that if  we have a real embedding, i.e.  $\s = \overline {\s}$,
then $\tau_V(\s) =  \frac{1}{2} \dim_{\CC} V_{\s}$.

Moreover,   if Hodge symmetry holds, 
the action is rigid if and only if  $$(R) \ \tau_V (\s)  \cdot   \tau_V(\bar{\s} )= 0, \ \ \forall \s .$$
\end{defin}

\begin{prop} If $(V, \sA,  r)$ is  rigid, then it is determined by the  $\sA$-module $V$ and by the Hodge-type.

Conversely, if $V$ is an $\sA$-module, and there is a Hodge structure such that 
$$ (HS) \ \tau_V(j) +  \tau_V(\bar{j}) = \dim_{\CC} V_{\sigma_j},$$ whenever  $\s_{\bar{j}} = \overline {\s_j}$, 
and moreover $$(R) \ \tau_V (j)  \cdot   \tau_V(\bar{j} )= 0 \ \ \forall j ,$$ then this Hodge structure determines   a rigid action $(V, \sA,  r)$.
\end{prop}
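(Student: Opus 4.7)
The plan is to exploit the isotypic decomposition $V \otimes_\QQ \CC = \bigoplus_{j=1}^{k} V_{\sigma_j}$ already established for commutative $\sA$, together with the fact that each $\sA$-equivariant Hodge decomposition must be compatible with it: since $\sA$ preserves $V^{1,0}$ and $V^{0,1}$, we get $V_{\sigma_j} = V^{1,0}_{\sigma_j} \oplus V^{0,1}_{\sigma_j}$ for every $j$. The forward implication then reduces to showing that rigidity is exactly the statement that, for each $j$, one of the two summands is zero; the converse reduces to showing that, conversely, the function $\tau_V$ satisfying $(HS)$ and $(R)$ lets us designate each $V_{\sigma_j}$ as living entirely in $V^{1,0}$ or entirely in $V^{0,1}$ in a way compatible with complex conjugation.

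For the forward direction I would argue as follows. If $(V,\sA,r)$ is rigid, then $\Hom_\sA(V^{0,1},V^{1,0}) = 0$, which after complexifying gives $\Hom_{\sA \otimes \CC}(V^{0,1}_{\sigma_j}, V^{1,0}_{\sigma_j}) = 0$ for each $j$ (homomorphisms between different isotypic components automatically vanish). Since both $V^{1,0}_{\sigma_j}$ and $V^{0,1}_{\sigma_j}$ are isotypic $\CC_{\sigma_j}$-modules, the vanishing of $\Hom$ forces one of them to be zero. Hence $\tau_V(j) \in \{0, \dim_\CC V_{\sigma_j}\}$, and once the $\sA$-module structure of $V$ is known, $\tau_V$ determines
\[
V^{1,0} = \bigoplus_{j : \tau_V(j) > 0} V_{\sigma_j},
\]
and therefore the Hodge structure itself.

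For the converse, given an $\sA$-module $V$ and a function $\tau \colon \sC \to \NN$ satisfying $(HS)$ and $(R)$, I would define $V^{1,0}_{\sigma_j} := V_{\sigma_j}$ if $\tau(j) > 0$, and $V^{1,0}_{\sigma_j} := 0$ otherwise, and set $V^{1,0} := \bigoplus_j V^{1,0}_{\sigma_j}$. The key verification is that the complex conjugation of $V \otimes_\QQ \CC$ (which exchanges $V_{\sigma_j}$ with $V_{\overline{\sigma_j}} = V_{\sigma_{\bar j}}$) interchanges $V^{1,0}$ and $V^{0,1}$. By condition $(R)$ exactly one of $\tau(j), \tau(\bar j)$ is nonzero for each conjugate pair $\{j,\bar j\}$ with $j \neq \bar j$, and by $(HS)$ its value equals $\dim_\CC V_{\sigma_j}$; so $\overline{V^{1,0}_{\sigma_j}}$ equals $V_{\sigma_{\bar j}}$ precisely when it should, namely when $\tau(\bar j)=0$. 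Moreover $(R)$ applied to any real embedding ($j = \bar j$) forces $\tau(j)=0$, and then $(HS)$ forces $V_{\sigma_j}=0$, so real embeddings do not occur and cause no trouble. Since each $V^{1,0}_{\sigma_j}$ is an $\sA$-submodule, the action is by morphisms of Hodge structures, and rigidity is immediate: for each $j$ one of $V^{1,0}_{\sigma_j}, V^{0,1}_{\sigma_j}$ vanishes, so $\Hom_{\sA\otimes\CC}(V^{0,1}, V^{1,0}) = \bigoplus_j \Hom_{\sA\otimes\CC}(V^{0,1}_{\sigma_j}, V^{1,0}_{\sigma_j}) = 0$, whence $\Hom_\sA(V^{0,1},V^{1,0})=0$.

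The argument is essentially bookkeeping once the isotypic decomposition of $V\otimes\CC$ is in hand; there is no deep obstacle. The only point demanding a moment's care is the conjugation check, where one must remember the indexing convention $\overline{V_{\sigma_j}} = V_{\sigma_{\bar j}}$ and use $(HS)$ and $(R)$ in tandem to rule out real embeddings and to match $\dim V^{1,0}_{\sigma_j} + \dim V^{0,1}_{\sigma_j} = \dim V_{\sigma_j}$.
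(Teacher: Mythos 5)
Your proof is correct and follows essentially the same route as the paper's (much terser) argument: decompose $V\otimes_\QQ\CC$ into the character spaces $V_{\sigma_j}$, observe that rigidity forces each $V_{\sigma_j}$ to lie entirely in $V^{1,0}$ or entirely in $V^{0,1}$, and read this off from (resp.\ encode it in) the function $\tau_V$ subject to $(HS)$ and $(R)$. The extra details you supply (the $\Hom$-vanishing between isotypic pieces and the conjugation check ruling out real embeddings) are exactly the content the paper delegates to its preceding Remark.
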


\Proof
In one direction, the Hodge-type determines $V^{0,1} , V^{1,0}$, since, $\sA$ being commutative, $V$ splits into  
 character spaces $V_{\sigma_j}$,  and the function $\tau_V$ determines  whether $V_{\sigma_j} \subset V^{0,1} $, or $V_{\sigma_j} \subset V^{1,0} $.
 
 In the other direction, the given Hodge structure is preserved by the action of $\sA$ hence we have an action in the category of rational Hodge structures.

\qed

	\begin{lemma} \label{special-cases}  Assume that we have a rigid action $(V,\sA,r)$ of split type, where  $$\sA = F_1 \oplus ... \oplus F_l$$ is commutative
	and  each  $F_i$ is a field.
		\begin{itemize}
	\item[i)] If $l=1$ (so $\sA=: F$ is a field), $V \cong W^n$ in $\sH^1$, where $W$ is a Hodge structure on $F$.
	\item[ii)]  the  rational Hodge structure $V$ splits as a direct sum 
	$$ V = W_1^{n_i} \oplus ... \oplus W_l^{n_l},$$ where $W_j$ is a Hodge structure on $F_j$ and $n_j \geq 0$. 
	
			\end{itemize}
	\end{lemma}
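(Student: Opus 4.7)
The plan is to derive (ii) from (i) by means of the decomposition $V = V_1 \oplus \dots \oplus V_l$ into sub-Hodge structures $V_j := 1_{F_j}\cdot V$ already constructed in the text; each $V_j$ carries a compatible action $r_j$ of $F_j$, and applying (i) to each triple $(V_j, F_j, r_j)$ immediately yields $V_j \cong W_j^{n_j}$ for some Hodge structure $W_j$ on $F_j$ and some $n_j\geq 0$, whence (ii). So the content is entirely in (i).

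For (i), first I would forget the Hodge structure and view $V$ purely as an $F$-module. Since $F$ is a field and $V$ is finite-dimensional over $\QQ$, $V$ is a free $F$-module of some rank $n \geq 0$, and I may fix an $F$-linear isomorphism $\phi : V \xrightarrow{\sim} F^n$. Under the character decomposition
\[
V \otimes_\QQ \CC = \bigoplus_{\s_j \in \sC} V_{\s_j},
\]
the $F$-linearity of $\phi$ forces $\dim_\CC V_{\s_j} = n$ for every embedding $\s_j : F \hookrightarrow \CC$. Next I would exploit rigidity: the combined relations $\tau_V(j) + \tau_V(\bar j) = n$ (Hodge symmetry) and $\tau_V(j)\cdot\tau_V(\bar j) = 0$ (rigidity) force $\{\tau_V(j),\tau_V(\bar j)\} = \{0,n\}$ on every conjugate pair. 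If $F$ has a real embedding $\s_j=\overline{\s_j}$, these two relations together give $n=0$, so $V=0$ and there is nothing to prove; otherwise $F$ must be totally imaginary, and I select a set of representatives $S\subset \sC$ with $\tau_V(j)=n$ for $\s_j\in S$ and $\tau_V(j)=0$ for $\s_j\in \overline{S}$. Then I define $W := F$ endowed with the rigid Hodge structure of Example \ref{imaginary} corresponding to the partition $\sC = S\sqcup \overline{S}$, i.e.\ $W^{1,0} := \bigoplus_{\s_j\in S}\CC_{\s_j}$.

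The decisive step is to check that the previously chosen $F$-linear isomorphism $\phi : V \to F^n = W^n$ is automatically an isomorphism in $\sH^1$. Since $\phi$ is $F$-linear, its complexification respects character spaces and sends $V_{\s_j}$ isomorphically onto $\CC_{\s_j}^n = (W^n)_{\s_j}$. Inside $V_{\s_j}$, rigidity ensures that $V^{1,0}_{\s_j}$ is either all of $V_{\s_j}$ (exactly when $\s_j\in S$) or zero (when $\s_j\in\overline{S}$), and exactly the same dichotomy holds for $(W^n)_{\s_j}$. Hence $\phi\otimes\CC$ sends $V^{1,0}_{\s_j}$ onto $(W^n)^{1,0}_{\s_j}$ for each $j$, proving that $\phi$ is an isomorphism in $\sH^1$ and establishing (i). The only mild obstacle is keeping track that rigidity is exactly what is needed to make an arbitrary $F$-linear trivialization automatically intertwine the two Hodge filtrations; absent rigidity, the Hodge pieces $V^{1,0}_{\s_j}$ could be proper subspaces of $V_{\s_j}$ and one would need extra data to identify them with $(W^n)^{1,0}_{\s_j}$.
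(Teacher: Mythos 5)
Your argument is correct and follows essentially the same route as the paper: reduce (ii) to (i) via the splitting $V = V_1 \oplus \dots \oplus V_l$, choose an $F$-linear trivialization $V \cong F^n$, define $W$ on $F$ according to the Hodge type of $V$, and use rigidity to see that each character space lies entirely in $V^{1,0}$ or $V^{0,1}$, so the trivialization automatically respects the Hodge decompositions. Your write-up is somewhat more careful than the paper's, in particular in noting that a real embedding forces $n=0$ rather than simply being excluded.
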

	
	\begin{proof}
Assertion  i):  here $V$ is an $F$-vector space, and so $f \colon V \stackrel{\sim}{\to} F^n$ as vector spaces. As we observed the rigidity of $(V,F,r)$  implies that all embeddings of $F$ into $\CC$ appear in either $V^{1,0}$ or $V^{0,1}$, hence  $F$ has no real ones.
  Let $\sigma_1, ..., \sigma_d$ be the embeddings of $F$ appearing in $V^{1,0}$, so that $\overline{\sigma_1}, ..., \overline{\sigma_d}$ are the ones appearing in $V^{0,1}$. Define a Hodge structure $W$ on $F$ according to the type of $V$, i.e. as follows:
	\[
	W \otimes_\QQ \CC = W^{1,0} \oplus W^{0,1}, \text{ where } W^{1,0} = \bigoplus_{j=1}^d \CC_{\sigma_j}, \; \; W^{0,1} = \bigoplus_{j=1}^d \CC_{\overline{\sigma_j}},
	\]
	Then $f_\CC \colon V \otimes_\QQ \CC \to (W \otimes_\QQ \CC)^n$ is an isomorphism of $\CC$-vector spaces together with an $F$-action. 
	
   Assertion   ii) follows immediately from assertion i),   since we have the splittings $\sA = F_1 \oplus ... \oplus F_l$ and 
	$ V =  V_1 \oplus  \dots  \oplus V_l$, and the $\sA$-rigidity 	of $V$ implies the $F_j$-rigidity of $V_j$ for all $j=1, \dots , l$, hence we can apply step i) to each $V_j$.
	
		\end{proof}

	 The crucial Proposition from which the  proof of Theorem \ref{ekedahl} follows is now

	 \begin{prop} \label{polarizable}
		 If $(V,\QQ[G],r)$ is rigid, then  $V$   polarizable. 
		\end{prop}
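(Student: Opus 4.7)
The plan is to combine the structural results already in place. First, by Proposition \ref{reduce_to_comm} the rigidity of $(V,\QQ[G],r)$ is equivalent to rigidity of the restricted action $(V, Z(\QQ[G]), r')$ of the commutative algebra $Z(\QQ[G])$; since polarizability is a property of the underlying rational Hodge structure, it suffices to exhibit a polarization of $V$ in this commutative setting. Lemma \ref{cm} writes $Z(\QQ[G]) = F_1 \oplus \cdots \oplus F_l$ with each $F_j$ either totally real or a CM-field, and Lemma \ref{special-cases} produces a splitting
\[
V \;\cong\; W_1^{n_1} \oplus \cdots \oplus W_l^{n_l}
\]
in $\sH^1$, where each $W_j$ is a Hodge structure on $F_j$ viewed as an $F_j$-module.

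Next I would show that whenever $n_j > 0$, the field $F_j$ must be a CM-field. The Remark following the splitting into character subspaces records that rigidity forces $V^{1,0}_{\sigma} = 0$ or $V^{0,1}_{\sigma} = 0$ for every embedding $\sigma$, and in particular forbids any real embedding from contributing to $V^{1,0}\oplus V^{0,1}$ at all (since $\overline{V^{1,0}_\sigma} = V^{0,1}_\sigma$ when $\sigma = \overline{\sigma}$ would force both halves to vanish). If $F_j$ were totally real, all of its embeddings would be real, so every character subspace coming from $F_j \otimes_\QQ \CC$ would have to vanish in the Hodge decomposition, forcing $W_j = 0$. Hence the summands that genuinely appear correspond to CM-fields.

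For those CM summands, Example \ref{CM} constructs an explicit polarization $E_j$ on $W_j$, of the form $E_j(x,y) = tr_{F_j/\QQ}(\zeta_j \, x\, \rho_j(y))$, where $\rho_j$ is the CM-involution and $\zeta_j \in F_j$ is imaginary with each relevant $\sigma_k(\zeta_j)$ having positive imaginary part on the embeddings $\sigma_k$ occurring in $W_j^{1,0}$. I would then define a polarization on $V$ as the orthogonal direct sum
\[
E \;:=\; \bigoplus_{j=1}^{l} \; E_j^{\oplus n_j},
\]
and verify that the two Hodge--Riemann bilinear relations on each $W_j$ pass to $V$ because the decomposition of $V$ is a splitting in $\sH^1$ and different summands are placed in orthogonal $\QQ$-subspaces.

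The main obstacle, and the step that demands the most care, is the reduction to CM-summands: one must exploit precisely the sharp form of the rigidity Remark, namely that rigidity forbids \emph{real} embeddings from appearing at all (not merely that it prescribes a partition of the non-real ones), in order to kill the totally real summands of $Z(\QQ[G])$. Once this is established, polarizability of $V$ follows mechanically by assembling the CM polarizations of Example \ref{CM} on the remaining factors.
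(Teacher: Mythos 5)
Your proof is correct and follows essentially the same route as the paper's: reduce to the centre via Proposition \ref{reduce_to_comm}, use Lemma \ref{cm} and the rigidity remark to rule out the totally real summands of $Z(\QQ[G])$, split $V$ via Lemma \ref{special-cases}, and assemble the CM polarizations of Example \ref{CM}. The only difference is that you spell out more explicitly why the totally real fields $F_j$ must act as zero and how the direct-sum polarization is formed, steps the paper compresses into ``by Lemma \ref{cm} and the previous remarks.''
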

		
		\begin{proof}
			 
	First of all, if $(V,\QQ[G],r)$ is rigid, then $(V,Z(\QQ[G]),r)$ is rigid by Proposition \ref{reduce_to_comm}.	
			 The assumption that $(V,Z(\QQ[G]),r)$ is rigid implies now that if  some field $F_j$ does not act as $0$ on $V$, 
			 then $F_j$ is necessarily  a CM-field by Lemma \ref{cm} and the previous remarks. By Lemma \ref{special-cases}, 
			  the  rational Hodge structure $V$ splits as a direct sum $W_1^{n_i} \oplus ... \oplus W_l^{n_l}$, where $W_j$ is a Hodge structure on $F_j$ and $n_j \geq 0$. \\
			 To give a polarization on $V$, it therefore suffices to show the existence of a polarization for a Hodge structure $W_j$
			on a CM-field  $F_j$.  But this was shown in Example \ref{CM}.

		\end{proof}

Ekedahl's Theorem is therefore proven. 

\smallskip

\section{Final remarks}

Assume that $X: = T$ is a complex torus of dimension $\geq 3$, and that $Y = T/G$ has only isolated singularities. 

Schlessinger   showed in \cite[Theorem 3]{Schlessinger} that every deformation of the analytic  germ of $Y$ at each  singular point of $Y$ is  trivial. 

Hence for every deformation $ \sY \ra B$   of $Y$  (we write informally $\sY$ as $ \{ Y_t\}_{t \in B}$) $Y_t$
has the same singularities as $Y$, and in particular it follows easily
that  $Y_t \setminus \Sing(Y_t)$ and $Y \setminus \Sing(Y)$ are diffeomorphic and a fortiori one
has an isomorphism   $\pi_1(Y_t \setminus Sing(Y_t)) \cong \pi_1(Y \setminus Sing(Y)) \cong \pi_1(\sY \setminus Sing(\sY ))$. Therefore the
surjection  $\pi_1(Y \setminus Sing(Y)) \to G$ induces a surjection $\pi_1(\sY \setminus Sing(\sY ) ) \to G$.

Whence, by Grauert's and Remmert's extension of Riemann's Existence Theorem, cf. \cite[Satz 32]{Grauert-Remmert}, $Y_t$ 
and $\sY$ have  respective  Galois covers $X_t$ and $\sX$ with group $G$. Hence, the action of $G$ extends to
the family $\sX$, and each deformation of $Y$ yields a deformation of the pair $(T,G)$.

The conclusion is that $Y$ is rigid if and only if the action of $G$ on $T$ is rigid. 
On the other hand, Ekedahl's theorem implies then  that if $Y$ is rigid, then $Y$ is projective.

Therefore in this case one cannot get a counterexample to the Kodaira property via rigidity.
We show  more generally  in the appendix  that any such a quotient $Y = T/G$ with  only isolated singularities satisfies the Kodaira property,
 since any action can be approximated by a projective one.

 An interesting question is: in the case where $Y$ is rigid, is it true that a minimal resolution of $Y$ is also rigid?

\bigskip 

\thanks{\textbf{Acknowledgements:} The authors would like to thank professor Yujiro Kawamata for
 a useful conversation and a  referee for careful reading of the  manuscript,   and   for pointing out that  for such rigid actions on tori
 we have $H^{2,0}(T)^G = 0$.}

\bigskip 

\section{Appendix by Fabrizio Catanese, Andreas Demleitner and Beno\^it Claudon}

 Ekedahl's theorem has the advantage of elucidating the structure of (rigid and non rigid)  actions  of a finite group $G$ on a complex torus.
 
 The method of period mappings,  used by  Green and Voisin (see proposition 17.20 and Lemma 17.21 of \cite{voisinbook}) for showing the density of algebraic tori (non constructive, since it uses  the implicit functions theorem),
 was  used by Graf in \cite{graf} to obtain a  general criterion, from which follows    the following theorem.
  
 \begin{theo}
 Let $(T,G)$ be a group action on a complex torus. Then there are arbitrarily small deformations $(T_t,G)$ of the action where $T_t$ is projective.  
 \end {theo}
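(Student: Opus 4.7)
My plan is to apply the general density criterion for Hodge loci, originating with Green--Voisin (Proposition 17.20 and Lemma 17.21 of \cite{voisinbook}) and stated in the required generality by Graf in \cite{graf}, to the $G$-equivariant deformations of $T$. First I would identify $\Def(T)^G$ with an open neighborhood of $V^{1,0}_0$ inside the Grassmannian of $G$-invariant complex subspaces $W \subset V_\CC$ of dimension $g$ with $W \oplus \overline W = V_\CC$, where $V := H^1(T,\QQ)$; by the discussion in Section \ref{prelim}, this is a smooth complex manifold whose points parametrize $G$-equivariant complex structures on the underlying real torus.

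Next I would reinterpret projectivity of $(T_t, G)$ as the existence of a $G$-invariant polarization, namely an element $E \in (\Lambda^2 \Lambda^\vee)^G$ (with $\Lambda = H_1(T,\ZZ) \subset V$) satisfying the two Hodge--Riemann bilinear relations with respect to $V^{1,0}_t$. The first relation $E_\CC(V^{1,0}_t, V^{1,0}_t) = 0$ cuts out a closed analytic subvariety $\mathrm{NL}(E) \subset \Def(T)^G$, namely the Hodge locus of $E$ inside the canonical $G$-invariant sub-variation of polarizable weight-two Hodge structure $H^2(T_t,\QQ)^G$, while positivity of $E$ on the Hermitian form of $V^{1,0}_t$ is an open condition. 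Applying Graf's criterion to this sub-VHS yields that the union $\bigcup_E \mathrm{NL}(E) \subset \Def(T)^G$, running over all nonzero integral $G$-invariant alternating forms $E$, is dense.

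To finish, I would produce a $G$-invariant real Kähler class $\omega_0$ on $T$ by averaging an arbitrary Kähler form over $G$. Since positivity is open, in a sufficiently small neighborhood $U$ of $V^{1,0}_0$ every real alternating form close enough to $\omega_0$ remains a positive Hermitian form, and by the density established above one can choose an integral $G$-invariant $E$ close to $\omega_0$ with $\mathrm{NL}(E) \cap U \neq \emptyset$, giving a projective $G$-equivariant deformation arbitrarily close to $(T,G)$. The main obstacle is the verification of Graf's criterion for the $G$-invariant sub-VHS on $H^2$: one needs the differential of the restricted period map to meet the rational Hodge classes transversally enough. For complex tori this reduces to the essentially tautological identification of the universal $G$-equivariant deformation with the $G$-equivariant Grassmannian period domain, together with the non-emptiness of the $G$-invariant positive cone; this is precisely the structural feature of tori that allows the density argument to go through and is the crux of the applicability of \cite{graf}.
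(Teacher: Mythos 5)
Your overall strategy coincides with the one the paper follows (and explicitly attributes to Green--Voisin and to Graf): identify the $G$-equivariant deformations of $T$ with the $G$-fixed locus in the Grassmannian period domain for weight-one Hodge structures, average a K\"ahler class over $G$ to make it invariant, and combine density of rational classes in $H^2(T,\QQ)^G\otimes\RR$ with openness of positivity to find an integral $G$-invariant polarization on a nearby fibre. The set-up, the role of averaging, and the final openness-plus-density argument all match the paper's proof.

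The gap is precisely in the step you defer. Applying the Green--Voisin/Graf density criterion to the invariant sub-VHS on $H^2$ requires verifying that contraction of the differential of the period map with the averaged K\"ahler class $\omega$ gives a \emph{surjective} map
\[
\beta \colon H^1(\Theta_T)^G \longrightarrow H^2(T,\hol_T)^G,
\]
equivalently that the evaluation map $\phi\colon (F^1)^G\to H^2(T,\CC)^G$ from the invariant Hodge bundle is a submersion at $(0,\omega)$. You assert that this ``reduces to the essentially tautological identification of the universal $G$-equivariant deformation with the $G$-equivariant Grassmannian period domain, together with the non-emptiness of the $G$-invariant positive cone,'' but neither fact implies the surjectivity: the Grassmannian identification only computes the tangent space $H^1(\Theta_T)^G=(\overline{U}^\vee\otimes_\CC U)^G$, and non-emptiness of the positive cone only supplies the class $\omega$ to contract with. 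What is actually needed --- and what the paper proves in its Step~2 --- is that cup product with $\omega$, viewed as the composite
\[
\overline{U}^\vee\otimes_\CC U \longrightarrow \overline{U}^\vee\otimes_\CC\overline{U}^\vee \longrightarrow \wedge^2(\overline{U}^\vee)=H^2(T,\hol_T),
\]
is surjective. This follows from the nondegeneracy of the $(1,1)$-form: writing $\omega=\sum_i\overline{u_i^\vee}\otimes u_i^\vee$ in a suitable basis of $U$, the first arrow becomes the isomorphism $\sum_{h,k}a_{h,k}\,\overline{u_h^\vee}\otimes u_k\mapsto\sum_{h,k}a_{h,k}\,\overline{u_h^\vee}\otimes\overline{u_k^\vee}$, and the second is the canonical surjection; the statement then descends to the $G$-invariant subspaces because $\omega$ is $G$-invariant (so the map is $G$-equivariant) and taking $G$-invariants is exact for a finite group in characteristic zero. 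Without this computation your appeal to \cite{graf} cites the conclusion rather than verifying its hypothesis; with it, your argument closes and agrees with the paper's.
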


 \Proof
 Given a complex torus
 $$T: = (\Lam \otimes_{\ZZ} \CC )/ (\Lam \oplus V^{1,0} ),$$
 set, as in section 2,
 \[
V \otimes_{\QQ} \CC = U \oplus \overline{U} = : V^{1,0} \oplus V^{0,1}.
\]
The  Teichm\"uller space of $T$ is  an open set $\sT$ in the Grassmann variety $Gr (n, V \otimes_{\QQ} \CC)$,
$$ \sT = \{ U_t  | U_t \oplus \overline{U_t} = V \otimes_{\QQ} \CC \},$$
parametrizing Hodge structures. By abuse of notation we shall use the notation $ t \in \sT$ for the points of Teichm\"uller space.
 
The deformations of the pair $(T,G)$ are parametrized by the submanifold $\sT^G$ of the  fixed points  for the action of $G$,
 which correspond to the set of the subspaces $U_t$ which are $G$-invariant. 
 
 The tangent space to $\sT^G$ at the point $(T,G)$ is, as seen in section 2,  the subspace 
 $$H^1(\Theta_{\torus})^G \subset  H^1(\Theta_{\torus}) =  H^1(\Oh_{\torus}) \otimes_\CC H^0(\Om^1_{\torus})^\vee = \overline{U}^\vee \otimes_\CC U.$$
 
 Over $\sT^G$ we have the Hodge bundle 
 $$ F^1 \subset \sT^G \times \wedge^2  (V \otimes_{\QQ} \CC)^{\vee} \   \ s.t. \ \  F^1_t = H^{1,1} (T_t) \oplus  H^{2,0} (T_t).$$
 
 Since  the  family of complex tori  is differentiably trivial  there is a canonical isomorphism 
 $$  \wedge^2  (V \otimes_{\QQ} \CC)^{\vee} = H^2 (T , \CC) \cong H^2 (T_t , \CC).$$
 
 This allows to define a holomorphic mapping $\psi : F^1  \ra H^2 (T , \CC)$ induced by the second projection.
 
 We can indeed consider the subbundle (defined over $\sT^G$)
 $$ (F^1)^G  \subset \sT^G \times H^2 (T , \CC)^G  \   \ s.t. \ \  (F^1)^G_t = H^{1,1} (T_t)^G \oplus  H^{2,0} (T_t)^G,$$
 and the corresponding holomorphic mapping $\phi : (F^1)^G  \ra H^2 (T , \CC)^G$ induced by the second projection.
\medskip

 {\bf Step 1:} Let $\eta$ be a K\"ahler metric on $T$. By averaging, we replace $\eta$ by $\sum_g g^* (\eta)$ and we can assume that $\eta$ is $G$-invariant.
 
 Let $\omega \in H^{1,1} (T) \cap H^2 (T_t , \RR)^G$ be the corresponding K\"ahler class.
 \medskip
 
 {\bf Step 2:} Setting $T = : T_0$, the map $\phi$  is a submersion at the point $(0, \omega)$.
 
 \medskip
 
 Before proving step 2, let us see how  the theorem follows.
 
 Let  $\sD$ be a sufficiently small neighbourhood of $\omega$ inside
 $$H^2 (T , \CC)^G = H^2 (T , \QQ)^G \otimes_{\QQ} \CC.$$
 
  For each class $\xi \in H^2 (T , \QQ)^G \cap \sD$, there is therefore a $(t,\xi)$ in a small neighbourhood $\sD'$ of $(0, \omega)$  such that 
  $$ \xi \in (F^1)^G_t = H^{1,1} (T_t)^G \oplus  H^{2,0} (T_t)^G.$$
  Since $\xi$  is real, $ \xi \in  H^{1,1} (T_t)^G \cap H^2 (T , \QQ)^G $. Taking $\sD$ sufficiently small, 
  the class $\xi$ is also positive definite,  hence $\xi$ is the class of a polarization on $T_t$.
  
 Shrinking   $\sD$ and $\sD'$, we obtain that $t \in \sT^G$ tends to $0$ (the point corresponding to the torus $T$).
 Hence the assertion of the theorem is proven.\\

{\bf Proof of Step 2.}

The tangent space to  $ (F^1)^G$ at the point $(0, \omega)$ is the direct sum 
$$ H^1(\Theta_{\torus})^G \oplus  (F^1)^G_0 = H^1(\Theta_{\torus})^G \oplus H^{1,1} (T)^G \oplus  H^{2,0} (T)^G, $$
and the derivative of $\phi$ is the direct sum of $\cup \omega, \iota$,
where $\iota$ is the inclusion $(F^1)^G_0 \subset   H^{2} (T, \CC)^G, $
while the cup product with $\omega \in $ yields a linear map 
$$ \beta : H^1(\Theta_{\torus})^G   \ra H^2 (T, \hol_T)^G = H^{0,2} (T)^G \subset H^{2} (T, \CC)^G.$$ 
 
 Whence $\phi$ is a submersion at $(0, \omega)$ iff $\beta$ is surjective.
 
 Now, $\beta$ is surjective if  the cup product with $\omega$ yields a surjection 
 $$ \beta' :  H^1(\Theta_{\torus})   \ra H^2 (T, \hol_T)$$
 (taking the subspace of  $G$-invariants is an exact functor).
 
 Observe that $H^2 (T, \hol_T) = \wedge^2 (\overline{U}^\vee)$, while 
 $$H^{1,1} (T) = H^1 (\Omega^1_T) = \overline{U}^\vee \otimes_{\CC} U^{\vee}.$$
 
 Cup product with $\omega$ is the composition of two linear maps 
 $$ H^1(\Theta_{\torus})  \ra H^2 ( \Theta_{\torus} \otimes_{\hol_T} \Omega^1_T) \ra  H^2 (T, \hol_T),$$ 
 where the second map is induced by contraction.
 
 It can be also seen as the composition of three linear maps: 
 $$ H^1(\Theta_{\torus})= \overline{U}^\vee \otimes_\CC U  \ra  (\overline{U}^\vee \otimes_\CC U) \otimes_\CC (\overline{U}^\vee \otimes_{\CC} U^{\vee})
 \ra \overline{U}^\vee \otimes_\CC \overline{U}^\vee  \ra \wedge^2 (\overline{U}^\vee) = 
   H^2 (T, \hol_T). $$ 
 
 Since the last linear map is a surjection, it suffices to show that the composition of the first two maps yields a surjection
 $$b : \overline{U}^\vee \otimes_\CC U  \ra \overline{U}^\vee \otimes_\CC \overline{U}^\vee .$$

 Since $\omega$ is a K\"ahler class, there exists a basis $u_i$ of $U$ such that 
 $$ \omega = \sum_i \overline{u_i^\vee} \otimes_\CC u_i^\vee  .$$
 
 Hence $$ \sum_{h,k} a_{h,k} \overline{u_h^\vee} \otimes_\CC u_k \ra  \sum_{h,k} a_{h,k} \overline{u_h^\vee} \otimes_\CC \overline{u_k^\vee}$$ 
  and $b$ is an isomorphism.
  
 \qed

\end{document}